\begin{document}
\providecommand{\keywords}[1]{\textbf{\textit{Keywords: }} #1}
\newtheorem{theorem}{Theorem}[section]
\newtheorem{lemma}[theorem]{Lemma}
\newtheorem{proposition}[theorem]{Proposition}
\newtheorem{corollary}[theorem]{Corollary}
\newtheorem{problem}[theorem]{Problem}
\newtheorem{question}[theorem]{Question}
\newtheorem{conjecture}[theorem]{Conjecture}
\newtheorem{claim}[theorem]{Claim}
\newtheorem{condition}[theorem]{Condition}

\theoremstyle{definition}
\newtheorem{definition}[theorem]{Definition} 
\theoremstyle{remark}
\newtheorem{remark}[theorem]{Remark}
\newtheorem{example}[theorem]{Example}
\newtheorem{condenum}{Condition}

\def\p{\mathfrak{p}}
\def\q{\mathfrak{q}}
\def\s{\mathfrak{S}}
\def\Gal{\mathrm{Gal}}
\def\Ker{\mathrm{Ker}}
\def\soc{\mathrm{soc}}
\def\Coker{\mathrm{Coker}}
\newcommand{\cc}{{\mathbb{C}}}   
\newcommand{\ff}{{\mathbb{F}}}  
\newcommand{\nn}{{\mathbb{N}}}   
\newcommand{\qq}{{\mathbb{Q}}}  
\newcommand{\rr}{{\mathbb{R}}}   
\newcommand{\zz}{{\mathbb{Z}}}  
\def\K{\kappa}

\title{Unramified extensions of quadratic number fields with Galois group $2.A_n$}
\author{Joachim K\"onig}
\address{Department of Mathematics Education, Korea National University of Education, Cheongju 28173, South Korea}
\email{jkoenig@knue.ac.kr}
\footnotetext{{\ 2020 Mathematics Subject Classification.} Primary 12F12; Secondary 11R21, 11R29.} 
\begin{abstract}
We realize infinitely many covering groups $2.A_n$ (where $A_n$ is the alternating group) as the Galois group of everywhere unramified Galois extensions over infinitely many quadratic number fields. After several predecessor works investigating special cases or proving conditional results in this direction, these are the first unramified realizations of infinitely many of these groups.\end{abstract}
\keywords{Inverse Galois theory; unramified extensions; embedding problems.}
\maketitle

\section{Statement of main result}

It is an open problem in inverse Galois theory whether every finite group occurs as the Galois group of some unramified Galois extension of a quadratic number field.
Here, the expression ``unramified extension" is to be understood as including the archimedean primes. The Cohen-Lenstra heuristics and their non-abelian generalizations (\cite{Wood}) provide expectations for the asymptotic count of such quadratic fields (when the finite group has been fixed). Even the question whether there are infinitely many such quadratic fields for a given group $G$ has been answered only for very particular groups $G$, among them the alternating groups $A_n$, see, e.g., \cite{Uchida}. See also the predecessor papers \cite{KoeSL27} and \cite{KoeSL25} for more background on this problem. In this note, we investigate the embeddability of unramified $A_n$ extensions in unramified $2.A_n$ extensions (where $2.A_n$ denotes the unique degree-$2$ stem cover of $A_n$). Concretely, we show the following.

\begin{theorem}
\label{thm:main}
Let $n\ge 4$ be such that either $n\equiv 3 \bmod 8$ is a prime, or $n\equiv 2\bmod 8$ and $n-1$ is a prime or a square. 
Then there exist infinitely many quadratic number fields which possess an unramified Galois extension with Galois group $2.A_n$, the double covering group of $A_n$.
\end{theorem}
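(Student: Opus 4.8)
The plan is to realize the required quadratic fields as discriminant fields of a suitable one-parameter family of degree-$n$ polynomials, and then to solve the central embedding problem $1\to\mathbb{Z}/2\to 2.A_n\to A_n\to 1$ over each of these fields while keeping the resulting extension unramified. Concretely, I would start from a family $f(t,X)\in\mathbb{Q}(t)[X]$ of degree-$n$ polynomials with arithmetic and geometric monodromy group $S_n$ over $\mathbb{Q}(t)$, realized as a branched cover of the $t$-line all of whose inertia groups are transpositions, with possibly one place whose inertia is an $(n-1)$-cycle: a trinomial family such as $X^n+aX+t$ (with a fixed rational $a$) when $n\equiv 3\bmod 8$ is prime, or a closely related family producing an $(n-1)$-cycle together with transpositions when $n\equiv 2\bmod 8$, is the natural candidate. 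Its discriminant $\delta(t)$ — a concrete polynomial, taken modulo squares — determines the quadratic subextension $\mathbb{Q}(t)(\sqrt{\delta(t)})$, over which the monodromy drops to $A_n$. I expect the congruence hypotheses on $n$ to be forced here and in the last two steps: the prime degree $n$ (or, for even $n$, an $(n-1)$-cycle together with a transposition, using that $n-1$ is prime or a square) is what makes $\mathrm{Gal}=S_n$ survive specialization and controls the fixed part of $\delta(t)$, while the residue of $n$ modulo $8$ governs the $2$-adic computation below.

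By Hilbert irreducibility the non-$S_n$ specializations form a thin set; combining this with squarefree-value results for the discriminant polynomial and an elementary sieve for the sign and for a congruence at $2$, I would obtain infinitely many $t_0\in\mathbb{Q}$ with $\mathrm{Gal}(f(t_0,X)/\mathbb{Q})=S_n$, with $D:=\delta(t_0)$ negative and squarefree away from a fixed controlled set of primes, and with $D$ in a prescribed class modulo $8$. Fix such a $t_0$, put $K=\mathbb{Q}(\sqrt D)$ (imaginary quadratic), $E=\mathbb{Q}[X]/(f(t_0,X))$, and let $M$ be the splitting field of $f(t_0,X)$. Since $\mathrm{disc}(f(t_0,X))$ is squarefree, $E/\mathbb{Q}$ is monogenic and tamely ramified with transposition inertia at every ramified prime, and each such prime divides $D$ and hence ramifies in $K$; therefore the inertia groups of $M/K$ are $I_p\cap A_n=1$ at all finite primes, and $K$ being imaginary handles the archimedean place. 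Thus $M/K$ is an everywhere unramified $A_n$-extension, which is the by-now-classical part of the argument (cf.\ \cite{Uchida}).

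For the new content I must embed $M/K$ into an everywhere unramified $2.A_n$-extension. The obstruction to lifting the homomorphism $\mathrm{Gal}(\overline{K}/K)\to A_n$ along $1\to\mathbb{Z}/2\to 2.A_n\to A_n\to 1$ is, by Serre's formula for the Witt invariant of the trace form, the image in $\mathrm{Br}(K)[2]$ of $w_2(Q_E)\in\mathrm{Br}(\mathbb{Q})[2]$, where $Q_E=\mathrm{Tr}_{E/\mathbb{Q}}(x^2)$. Squarefreeness of $\mathrm{disc}(f(t_0,X))$ makes this computable: $Q_E$ is unramified away from $2D$, it contributes an explicit quadratic-residue term at each odd prime dividing $D$ (coming from the single ramified quadratic local factor), and it contributes at $2$ a class depending only on $n\bmod 8$ and $\mathrm{sign}(D)$. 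Assembling these, $w_2(Q_E)$ turns out to be a quaternion class of the shape $(c,D)$, up to a universal class which the sign of $D$ and the imposed congruence modulo $8$ force to split over $K$; since $(c,D)$ is trivial over $K=\mathbb{Q}(\sqrt D)$, the embedding problem is solvable. I expect this local bookkeeping at $2$, which pins down the congruence conditions on $n$, to be the most delicate computation.

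It remains to choose the solution to be unramified, which I regard as the main obstacle. The solutions form a torsor under $H^1(K,\mathbb{Z}/2)=K^\times/K^{\times2}$, and since $M/K$ is already unramified, any solution $\widetilde M/K$ has all of its inertia inside the central $\mathbb{Z}/2$, so that its ramification is cut out by a quadratic character of $K$; twisting a fixed reference solution by $\psi\in K^\times/K^{\times2}$ changes this character by $\psi$. It therefore suffices to produce $\psi$ whose associated quadratic extension of $K$ is ramified exactly at the bad primes of the reference solution, and unramified with the correct behaviour at the primes above $2$ and at infinity. Here the congruence conditions on $n$ and the class of $D$ modulo $8$ keep the set of bad primes small and structured — contained, say, in the primes above $2$, or with product trivial in the relevant ray class group of $K$ — and I would appeal to genus theory of the imaginary quadratic field $K$, together with class field theory describing quadratic extensions with prescribed ramification, to exhibit such a $\psi$. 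Matching the local ramification data of the reference solution against the $2$-torsion of the ray class group of $K$, uniformly over the infinitely many specializations, is the crux of the proof.
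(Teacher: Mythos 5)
Your proposal is the ``classical'' route (trinomial families, squarefree discriminant, Serre's trace-form formula, twisting), and it differs in essentially every technical layer from what the paper does. Unfortunately the first layer already contains a genuine gap that the paper's machinery exists precisely to avoid.

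\textbf{The squarefree-discriminant step is not available.} You want infinitely many $t_0$ with $\delta(t_0)$ squarefree (away from a fixed set of primes) so that every ramified prime has transposition inertia and divides $D$; this is what makes the $A_n$-extension of $K=\mathbb{Q}(\sqrt D)$ unramified and gives a clean trace form. But for your trinomial $X^n+aX+t$ the discriminant in $t$ is a polynomial of degree $n-1$, and squarefree values of a one-variable integer polynomial of degree $\ge 4$ are not known to be attained infinitely often without conjectural input. This is exactly why the predecessor \cite{KimK} is conditional on Bunyakovsky, and avoiding this dependence is the whole point of the present paper. The paper does not require squarefreeness: it controls inertia \emph{and} decomposition groups of specializations through the Specialization Inertia Theorem and \cite[Thm.\ 4.1]{KLN}, where primes dividing the relevant discriminant factor to \emph{even} multiplicity are simply unramified rather than obstructions, and the family $X^{n-1}(X-1)+S$ with a single transposition branch point is pulled back along $S=\gamma T^{n-1}$ precisely so that the residual Frobenii at transposition-ramified primes land in $A_{n-2}$. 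That pullback (the choice of $\gamma$ forcing ramified primes to split in a specific quadratic field) is the central new idea and has no analogue in your sketch.

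\textbf{The embedding step is also set up differently and your version has an unresolved obstacle.} The paper solves $1\to C_2\to 2.S_n^+\to S_n\to 1$ \emph{over $\mathbb{Q}$} (Proposition~\ref{prop:embed}), imposing explicit conditions on decomposition groups at the ramified primes of $M/\mathbb{Q}$ --- specifically that they lie in $\langle(a,b)\rangle\times\mathrm{Alt}(\{1,\dots,n\}\setminus\{a,b\})$, which forces their preimage in $2.S_n^+$ to be abelian and makes every local problem solvable with an unramified solution. Over $\mathbb{Q}$ the subsequent twisting is unproblematic because $\mathbb{Q}^\times/\mathbb{Q}^{\times2}$ detects arbitrary finite sets of ramified primes. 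You instead work over $K$ with $2.A_n$ directly. There the local problems at finite places are indeed easy (since $M/K$ is unramified, decomposition groups are cyclic and unramified local lifts always exist), but you then need a global $\psi\in K^\times/K^{\times2}$ realizing prescribed ramified local twists at the bad primes of a reference solution and no ramification elsewhere; over an imaginary quadratic $K$ of uncontrolled class number this is obstructed by the $2$-torsion of (narrow ray) class groups, uniformly over infinitely many $K$. You flag this yourself as ``the crux'' but do not give an argument, and no standard genus-theory statement resolves it in the required uniformity. The trace-form computation of the Brauer obstruction is plausible in spirit (it is the classical Serre approach) but the claimed shape ``$(c,D)$ up to a universal class'' is asserted, not derived, and would at any rate sit on top of the missing squarefree input.

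In short: the correct global architecture (transposition inertia, central embedding problem, twisting) is recognized, but (i) the squarefree-discriminant mechanism cannot be implemented for $n\ge 5$, whereas the paper replaces it with function-field specialization results plus a pullback designed to control decomposition groups without squarefreeness; and (ii) the twisting over $K$ rather than over $\mathbb{Q}$ leaves a class-group obstruction that the proposal does not remove, whereas the paper works over $\mathbb{Q}$ with $2.S_n^+$ and encodes the needed local solvability into group-theoretic conditions on decomposition groups.
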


 Note that the assumptions hold for infinitely many values of $n$ (in particular, for all $n=(2k+1)^2+1$, $k\in \mathbb{N}$, as well as for a positive density subset of all prime numbers $n$ by Dirichlet's prime number theorem). 
In \cite{KoeSL27}, the author gave a \underline{{\it wrong}} (cf.\ \cite{Koe_SL27_Err}) proof of the analogous assertion for $n=7$, based on a misreading of the group structure of a double covering group of $S_7$. 
The underlying Galois-theoretic ideas were however not without merit and could have been put to much better use, which we will demonstrate here in the spirit of ``better late than never". 
See also \cite{KimK}, which showed the same result for {\it all} $n$, but conditional on the (wide-open) Bunyakovsky conjecture on prime values of irreducible polynomials. Comparable embedding problems in the context of unramified extensions have been investigated for other groups, for example for the quaternion group in \cite{Lemmer}.

In the last section, we present an alternative approach, which, in spite of being slightly computation-heavy and at this point only yielding the analogous result for one further open case ($n=6$), should be of interest in its own right.

\section{Technical tools}
In the following, for $n\ge 4$, denote by $2.S_n^+$ the unique stem double cover of $S_n$ in which the transpositions lift to elements of order $2$. This contains the double cover $2.A_n$ of $A_n$ as an index-$2$ normal subgroup; cf.\ \cite[Section 2.7.2]{Wilson}.
The proof of Theorem \ref{thm:main} will make use of certain well-known results on solutions of central embedding problems, related to the group $2.S_n^+$. For general background on such embedding problems, see, e.g., \cite[Chapter IV.1.2]{MM} or \cite[Section 2.1]{Serre}.
\begin{proposition}
\label{prop:embed}
Assume that $K/\mathbb{Q}$ is a Galois extension with Galois group $S_n$ ($n\ge 4$) and $F/\mathbb{Q}$ is the unique quadratic subextension of $K/\mathbb{Q}$. Assume furthermore that for each prime number $p$ ramified in $K/\mathbb{Q}$, one of the following holds:
\begin{itemize}
\item[i)] The inertia group at (any prime extending) $p$ is generated by a transposition, say $(a,b)\in S_n$, and the decomposition group at $p$ is contained in $\langle (a,b)\rangle\times \textrm{Alt}(\{1,\dots, n\}\setminus \{a,b\})$.
\item[ii)] The inertia group at $p$ is generated by an involution with $4j+1$ ($j\ge 0$) disjoint transpositions, and every involution in the decomposition group at $p$ has $d\equiv 0,1 \bmod 4$ disjoint transpositions.\footnote{In particular, this is fulfilled if the residue degree at $p$ is odd, since there is then a unique involution in the decomposition group.}
\end{itemize}
 Then $K/\mathbb{Q}$ embeds into a $2.S_n^+$-extension $L/\mathbb{Q}$ in which all the ramified non-archimedean primes have inertia group of order $2$ lying outside of $2.A_n$. In particular,
the extension $L/F$ is a $2.A_n$-extension unramified at all non-archimedean primes. 
\end{proposition}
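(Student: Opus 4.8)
The plan is to realize $2.A_n$ as a Galois group over $\mathbb{Q}$ rather than directly over $F$: we regard $2.A_n$ as the full preimage of $A_n$ in the double cover $2.S_n^{+}$ of $S_n$ (the cover in which transpositions lift to involutions), and we try to solve the central embedding problem given by $1\to \mathbb{Z}/2\to 2.S_n^{+}\to S_n\to 1$ together with the surjection $G_{\mathbb{Q}}\twoheadrightarrow \Gal(K/\mathbb{Q})=S_n$. If $\widetilde{K}/\mathbb{Q}$ is a solution — a $2.S_n^{+}$-extension containing $K$ — then the subfield of $\widetilde{K}$ fixed by $2.A_n\le 2.S_n^{+}$ is precisely $F$ (it contains $\Gal(\widetilde{K}/K)=\mathbb{Z}/2$ and has image $A_n$ in $S_n$), so $\widetilde{K}/F$ is a $2.A_n$-extension. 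The key observation is that $\widetilde{K}/F$ is unramified at a prime above $p$ if and only if the inertia group $I_p(\widetilde{K}/\mathbb{Q})$ meets $2.A_n$ trivially, equivalently maps injectively to $2.S_n^{+}/2.A_n\cong\mathbb{Z}/2$. So I would aim to build $\widetilde{K}$ with inertia group, at every finite prime $p$, either trivial (when $p$ is unramified in $K$) or equal to $\langle\tilde t\rangle$ where $\tilde t\in 2.S_n^{+}$ is a lift of the inertia generator $t\in S_n$ provided by (i) or (ii). The point of (i)/(ii) is that such a $t$ is an \emph{odd} permutation, so $\tilde t\notin 2.A_n$, while its number of disjoint transpositions is $\equiv 1\pmod 4$, so in $2.S_n^{+}$ the lift $\tilde t$ has order $2$; hence $\langle\tilde t\rangle\cap 2.A_n=1$ and $\widetilde{K}/F$ is unramified at $p$. (Taking $\widetilde{K}=K$ as the ``trivial lift'' already shows that $K/F$ is unramified at all finite primes under (i)/(ii), since $I_p(K/\mathbb{Q})\cap A_n=1$.)

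To produce such a $\widetilde{K}$, I would first solve the embedding problem locally, with prescribed inertia, at every place. At a prime $p$ unramified in $K$ (including $p=2$ when $2$ is unramified in $K$) the local $S_n$-datum is unramified, and since $\Gal(\mathbb{Q}_p^{\mathrm{ur}}/\mathbb{Q}_p)\cong\widehat{\mathbb{Z}}$ has cohomological dimension $1$, there is an unramified local solution. At a ramified prime $p$ the conditions (i)/(ii) are exactly what is needed: taking a generator $\tau$ of inertia and setting $\psi(\tau)=\tilde t$, one must extend $\psi$ to (the tame quotient of, or for $p=2$ all of) the decomposition group into $2.S_n^{+}$ keeping inertia equal to $\langle\tilde t\rangle$, and the obstruction to this is a commutator in the centre $\langle z\rangle$ of $2.S_n^{+}$ — namely, one needs a lift of the Frobenius part of the decomposition group commuting with $\tilde t$. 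Condition (i) forces this commutator to vanish via the commutator formula in $2.S_n^{+}$ for permutations of disjoint support (Frobenius lies in $\mathrm{Alt}(\{1,\dots,n\}\setminus\{a,b\})$, hence is an even permutation disjoint from $(a,b)$); condition (ii) does the same, the case singled out in the footnote — residue degree odd, decomposition group cyclic with a unique involution — being the transparent one, since there $\tilde t=\tilde\phi^{m}$ for an odd $m$ and any two lifts of $t$ commute, forcing $z^{m}=1$, i.e.\ the commutator to vanish. Either way one gets a local solution with inertia exactly $\langle\tilde t\rangle$.

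Consequently every \emph{finite} local obstruction to the $2.S_n^{+}$-embedding problem vanishes. The global obstruction $\mathrm{sp}_n\in H^2(G_{\mathbb{Q}},\mathbb{Z}/2)=\mathrm{Br}(\mathbb{Q})[2]$ is then forced to be zero: by Serre's formula $\mathrm{sp}_n = w_2(q_E)+(2)\cup(d_E)$, where $E=K^{S_{n-1}}$ is the degree-$n$ subfield fixed by a point stabilizer and $q_E=\mathrm{Tr}_{E/\mathbb{Q}}(x^2)$, and a class in $\mathrm{Br}(\mathbb{Q})[2]$ with trivial invariant at all finite places has trivial invariant at $\infty$ as well, by the sum formula, hence is $0$. (One may also argue directly over $F$: since $d_E$ becomes a square in $F=\mathbb{Q}(\sqrt{d_E})$ and every prime of $\mathbb{Q}$ ramified in $K$ ramifies in $F$, the restriction of $\mathrm{sp}_n$ to $G_F$ has trivial invariant at every place of $F$.) So the embedding problem has a global solution $\widetilde{K}_0/\mathbb{Q}$; its ramification can fail to be the prescribed one only at the finitely many primes where $\widetilde{K}_0/K$ ramifies, and I would correct it there by twisting $\widetilde{K}_0$ by a suitable quadratic field $M/\mathbb{Q}$ — unramified outside that finite set and with prescribed completions inside it — whose existence is a Grunwald-type statement for $\mathbb{Z}/2$ over $\mathbb{Q}$, using the freedom left at the archimedean place. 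The twisted extension $\widetilde{K}$ then has inertia $\langle\tilde t\rangle$ at each ramified prime of $K$ and is unramified at all other finite primes, so $L:=\widetilde{K}$ works.

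I expect the real work to be in the second step: not merely solving the local embedding problem at each ramified prime, but controlling the inertia of the local solution \emph{exactly}, including in the wild case $p=2$. This is the precise technical content that conditions (i) and (ii) are tailored to supply, and it is there that the bookkeeping modulo $4$ of the number of disjoint transpositions — so that the relevant lifts in $2.S_n^{+}$ genuinely have order $2$ and the central commutators vanish — is essential.
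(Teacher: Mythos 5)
Your strategy coincides with the paper's: pass to the stem cover $2.S_n^{+}$ in which transpositions lift to involutions, note that unramifiedness of $L/F$ at a finite prime is equivalent to the inertia group of the lifted extension meeting $2.A_n$ trivially, and reduce to solving the induced local embedding problems with controlled inertia. Your global step (all finite local obstructions vanish, hence by the sum formula the archimedean one does too, hence the class in $\mathrm{Br}(\mathbb{Q})[2]$ is zero; then twist by a quadratic character to match prescribed local solutions) is precisely the local--global principle the paper invokes as a black box via [Serre, Prop.\ 2.1.7]. Your reconstruction is standard, though the twisting step as stated --- a quadratic field unramified outside a finite set $S$ with arbitrarily prescribed completions inside $S$ --- is not literally available over $\mathbb{Q}$; what one actually needs (and what suffices) is to hit a prescribed coset modulo the unramified local characters at each $v\in S$, which is the form in which the cited proposition delivers the approximation.

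The genuine gap is in case ii) when the decomposition group $D$ contains more than one involution. You prove the required local lifting only in the situation of the footnote (odd residue degree, $D$ cyclic with a unique involution) and then assert that ``condition (ii) does the same''; but in the general case no commutator computation with a single Frobenius settles the matter, and your argument never uses the hypothesis that \emph{every} involution of $D$ has $d\equiv 0,1\bmod 4$ transpositions --- which is exactly the clause that condition ii) adds beyond the inertia generator. The paper's argument is: $D$ is abelian (the order-$2$ inertia subgroup is central and $D/I$ is cyclic); its subgroup $W$ of elements of order $\le 2$ in the $2$-Sylow is $C_2$ or $C_2\times C_2$; since every involution of $D$ lifts to an involution of $2.S_n^{+}$, the full preimage of $W$ has exponent $2$, hence is elementary abelian and splits over $W$; this splitting propagates to the $2$-Sylow subgroup and then to all of $D$ (the odd part lifts for free). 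Thus the preimage of $D$ splits and the local embedding problem has a ``trivial'' solution obtained by composing $G_{\mathbb{Q}_p}\to D$ with a section, whose inertia image is an order-$2$ lift of the odd involution $t$ and hence meets $2.A_n$ trivially. Note that this purely group-theoretic splitting also removes your concern about wildness at $p=2$, since no structure of the local Galois group beyond the map onto $D$ is used. Supplying this splitting argument (or an equivalent) is needed to close your proof of case ii).
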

\begin{proof}
The second assertion is directly implied by the first due to the multiplicativity of ramification indices in towers of Galois extensions. 
The first assertion 
is the combination of the following two assertions: a) the central embedding problem given by 
\begin{equation}\label{eq:embed} 1\to C_2\to 2.S_n^+\to \textrm{Gal}(K/\mathbb{Q}) \cong S_n\to 1\end{equation}
 possesses a proper solution; and b), the solution field $L$ can be chosen such that no non-archimedean prime ramifies in $L/K$ (and thus indeed in $L/F$, since assumptions i) and ii) imply that $K/F$ is unramified at all non-archimedean primes).
Both assertions can be tackled using well-known local-global principles: notably, by \cite[Cor.\ IV.10.2 and Remark]{MM}, the embedding problem possesses a solution as soon as for every non-archimedean prime $p$, the associated local embedding problem possesses a solution. Note furthermore that the solution to the global embedding problem will automatically be proper, since the extension $2.S_n^+$ of $S_n$ is non-split. Moreover, \cite[Prop.\ 2.1.7]{Serre} ensures that, in case of solvability of the embedding problem, a solution field $L$ maybe chosen such that the set of ramified non-archimedean primes of $L/\mathbb{Q}$ equals the one of $K/\mathbb{Q}$ and restriction to inertia (for each of these ramified primes) can be chosen to equal that of any given local solution.

In total, we are therefore left with showing the following: For any prime number $p$ ramified in $K/\mathbb{Q}$, the associated local embedding problem has a solution whose inertia group remains of order $2$ (i.e., the ramification index of $p$ in $K/\mathbb{Q}$).

Let $D$ be the decomposition group in $K/\mathbb{Q}$ at a ramified prime $p$. In case i), we have $D = \langle(a,b)\rangle\times U$ for a cyclic subgroup $U$ of $Alt(\{1,\dots, n\}\setminus\{a,b\})$. We claim that the preimage of $D$ in $2.S_n^+$ is abelian. This (together with the fact that the inertia group $\langle(a,b)\rangle$ lifts to an order $2$ group in $2.S_n^+$) then implies that there is no local obstruction at $p$ to the solvability of the embedding problem given by \eqref{eq:embed}, since the degree-$2$ unramified extension of the completion of $K/\mathbb{Q}$ at $p$ is a solution of the induced local embedding problem. To see the claim, recall that it is well-known (e.g., \cite[Section 2.7.2]{Wilson}) that in $G=2.S_n^+$, the commutator $[x,y]:=x^{-1}y^{-1}xy$ of two (lifts of) transpositions $x,y$ with disjoint support is the central involution of $G$. Combine this with the commutator identity $[x,zy] = [x,y]\cdot [x,z]^y$ to see instantly that the commutator of a lift of the transposition $(a,b)$ with any lift of an element of $Sym(\{1,\dots, n\}\setminus\{a,b\})$ is trivial if and only if the latter element is an even permutation.

In case ii), since the inertia group at $p$ in $K/\mathbb{Q}$ is central in the decomposition group and has cyclic quotient, the whole decomposition group $D$ is abelian. Write it as $D = U\times V$ where $U$ is the $2$-Sylow group. The elements of order $\le 2$ in $U$ form a subgroup $W$ isomorphic to $C_2$ or $C_2\times C_2$. Since the involutions of $S_n$ which lift to elements of order $2$ in $2.S_n^+$ are exactly those with $d\equiv 0,1 \bmod 4$ transpositions, $W$ has a preimage in $2.S_n^+$ in which every element has order $\le 2$, i.e., is elementary-abelian. There is thus a subgroup of $2.S_n^+$ mapping isomorphically onto $W$, thus also one mapping isomorphically onto $U$, and therefore finally also one mapping isomorphically onto $D$. But then the associated local embedding problem at $p$ has a trivial solution.
\end{proof}

Compare also the proofs of \cite[Lemma 3.1 and Lemma 4.1]{KoeSL25}, which deal with the special cases $n=5$ and $n=6$ of Proposition \ref{prop:embed}.  

In view of Proposition \ref{prop:embed}, the bulk of the proof of Theorem \ref{thm:main} is reduced to finding $S_n$ extensions of $\mathbb{Q}$ with certain restrictions on the inertia and decomposition groups at ramified primes. The main idea now, attempted also in \cite{KoeSL27}, is to enforce such conditions ``on the geometric level", i.e., construct function field extensions of $\mathbb{Q}(t)$ with Galois group $S_n$, and whose ramified places have transposition inertia and decomposition group embedding into $S_2\times A_{n-2}$ as in Proposition \ref{prop:embed}; and to then use known results on the local behavior of specializations of function field extensions to obtain these local conditions for Galois extensions of $\mathbb{Q}$.

\begin{theorem}
\label{thm:tech1}
Let $n\ge 7$ be an integer with $n\equiv 3 \bmod 4$. 
Let $t\in \mathbb{Z}$ be coprime  to $n(n-1)$. Let 
$f(X):=f_{t}(X)=X^{n-1}(X-1) +n^{\frac{n-1}{2}}\cdot (\frac{n-1}{2})^{-n}\cdot t^{n-1}
\in \mathbb{Q}[X]$. 
Then, for 
infinitely many such 
 $t$, the splitting field $K$ of $f$ is an $S_n$ extension of $\mathbb{Q}$ with the following properties.
\begin{itemize}
\item[i)] $K$ is unramified at all prime divisors of $n-1$.
\item[ii)] For all prime numbers which are ramified in $K$ and do not divide $n$, the inertia group is generated by a transposition, say $(a,b)\in S_n$, and the decomposition group is contained in $Sym(\{a,b\})\times Alt(\{1,\dots, n\}\setminus\{a,b\})$.
\item[iii)] If moreover $n=p$ is a prime number, then additionally the inertia group at $p$ is generated by an involution with exactly one fixed point, and the residue degree at $p$ is odd.
\end{itemize}

\end{theorem}

\begin{proof}
To see i), write $(\frac{n-1}{2})^n f((\frac{n-1}{2})^{-1}X) = X^{n-1}(X-\frac{n-1}{2})+n^{\frac{n-1}{2}}\cdot t^{n-1} 
\equiv X^n+t^{n-1} 
\bmod n-1$. Since $t$ is coprime to $n-1$, the latter polynomial is separable modulo any prime divisor of $n-1$, implying that these primes are unramified in $K$.

To see ii), note that all prime numbers other than the divisors of $n(n-1)$ are not fixed divisors of the discriminant of $f_t(X)$ as $t$ varies over $\mathbb{Z}$. In particular, by choosing $t$ in a suitable residue class, we may assume any prescribed finite set $\mathcal{S}_0$ to be unramified in $K$. All further ramified primes may therefore be assumed ``non-exceptional", in the sense of  \cite[Theorem 4.1]{KLN},
 for the extension of the rational function field $\mathbb{Q}(T)$ given by the splitting field of $f_T(X)$ (i.e., the polynomial arising from $f_t$ via replacing $t$ by the transcendental $T$) .
 Now write $S=n^{\frac{n-1}{2}}\cdot (\frac{n-1}{2})^{-n}\cdot T^{n-1}$. The function field extension $E/\mathbb{Q}(S)$ given by $X^{n-1}(X-1)+S$ is well-known to be a $\mathbb{Q}$-regular $S_n$-extension ramified exactly at the three places $S\mapsto 0$, $S\mapsto \infty$ and $S\mapsto \frac{(n-1)^{n-1}}{n^n}$, with corresponding inertia groups generated by an $n-1$-cycle, an $n$-cycle and a transposition respectively (cf., e.g., \cite[p.42]{Serre}).
Theorem 4.1 of \cite{KLN} therefore implies that all further ramified non-archimedean primes $q$ in $K/\mathbb{Q}$ ``inherit" (in a sense to be made precise below) their local behavior from one of the ramified places of $E/\mathbb{Q}(S)$. On the other hand, since the specialization value $S\mapsto n^{\frac{n-1}{2}}\cdot (\frac{n-1}{2})^{-n}\cdot t^{n-1}\in \mathbb{Q}$ is $q$-integral and of $q$-valuation divisible by $n-1$, the so-called Specialization Inertia Theorem (e.g.,\cite{Beck} or \cite{Leg}) implies there is no ramified prime inheriting its local behavior from $S\mapsto \infty$ or $S\mapsto 0$ (the latter since the specialization value and the branch point $0$ meet at $q$ with multiplicity divisible by the ramification index at $0$).

 The only primes left to consider are  the divisors (coprime to $n(n-1)$) of the numerator of 
 $$n^{\frac{n-1}{2}}\cdot (\frac{n-1}{2})^{-n}\cdot t^{n-1} - \frac{(n-1)^{n-1}}{n^n} = (\frac{n(n-1)}{2})^n\cdot (n^{\frac{3n-1}{2}} t^{n-1} - (\frac{n-1}{2})^{2n-1}2^{n-1}).$$ 
 More precisely, \cite[Theorem 4.1]{KLN} implies that for all prime divisors $q$ of the latter factor of odd multiplicity,\footnote{Prime divisors of even multiplicity would once again be unramified, by the specialization inertia theorem, since this implies meeting of the specialization value and the branch point $\frac{(n-1)^{n-1}}{n^n}$ of multiplicity divisible by the ramification index $2$ at this branch point.} the inertia group is generated by a transposition $(a,b)$, and the decomposition group is of the form $\langle(a,b)\rangle\times U_q$, where $U_q\le Sym(\{1,\dots, n\}\setminus\{a,b\})$ is the cyclic group generated by the Frobenius at $q$ in the residue extension $F/\mathbb{Q}$ at $S\mapsto \frac{(n-1)^{n-1}}{n^n}$ of $E/\mathbb{Q}(S)$. It remains to show that this Frobenius is an even permutation of $S_{n-2}$. To see this, note that the discriminant of $F$ equals, up to squares, the discriminant of $g(X):=\sum_{i=0}^{n-2} (i+1)X^i$, since the latter polynomial is the degree-$n-2$ factor of $f_{(n-1)^{n-1}/n^n}(\frac{n-1}{n}X)$. This discriminant is $(-1)^{\frac{n+1}{2}} \cdot 2\cdot n^{n-3} (n-1)^{n-4} = 2\cdot n^{n-3} (n-1)^{n-4}$ (for $n\equiv 3 \bmod 4$), so that it now remains to show that any prime divisor $q$ as above splits in $\mathbb{Q}(\sqrt{\frac{n-1}{2}})$. This requirement is, in fact, the reason for the choice of coefficient $\gamma:= n^{\frac{n-1}{2}}\cdot (\frac{n-1}{2})^{-n}$ ahead of $t^{n-1}$ in the definition of $f_t(X)$. Namely, we may choose a transcendental $U$ with $S=\gamma U^2 = \gamma (T^{\frac{n-1}{2}})^2$, so that $\mathbb{Q}(U)$ becomes a quadratic subextension of the extension $\mathbb{Q}(T)/\mathbb{Q}(S)$. We can use this to interpret $f_t(X)$ as the specialization of 
 $E(U)/\mathbb{Q}(U)$ at $U\mapsto t^{\frac{n-1}{2}}$, and repeat the above to see that any ramified prime $q$ of $K/\mathbb{Q}$ coprime to $n(n-1)$ must inherit its local behavior from a place $U\mapsto u_0 \in \overline{\mathbb{Q}\cup\{\infty\}}$ of ramification index $2$ in  $E(U)/\mathbb{Q}(U)$. But being a preimage of $S\mapsto \frac{(n-1)^{n-1}}{n^n}$ under the quadratic pullback $S=\gamma U^2$, such a place has residue field $\mathbb{Q}(\sqrt{\gamma \frac{(n-1)^{n-1}}{n^n}}) = \mathbb{Q}(\sqrt{\gamma n}) = \mathbb{Q}(\sqrt{\frac{n-1}{2}})$, so that a necessary requirement for the mod-$q$ meeting of the ($\mathbb{Q}$-rational!) specialization value $U\mapsto t^{\frac{n-1}{2}}$ with such a place is the $\mathbb{F}_q$-rationality of the latter, meaning that $q$ is split in $\mathbb{Q}(\sqrt{\frac{n-1}{2}})$, as claimed.
 
 To see iii), assume additionally that $n=p$ is a prime number. Note first that the Newton polygon of $f(X)$ shows instantly that only one of the roots of $f$ lies in $\mathbb{Q}_p$ whereas all others have half-integral $p$-adic valuation and therefore lie in a ramified extension of $\mathbb{Q}_p$. On the other hand, choose the quadratic ramified extension $\mathbb{Q}_p(\sqrt{-2p})/\mathbb{Q}_p$.
We have $t^{1-p}p^{-\frac{p-1}{2}} \cdot f_t(\sqrt{-2 p} \cdot tX) = (-2)^{\frac{p-1}{2}}X^{p-1}(\sqrt{-2 p} \cdot tX - 1) + (\frac{p-1}{2})^{-p}$, which modulo the maximal ideal of $\mathbb{Q}_p(\sqrt{-2p})$ reduces, up to constant factor, to the separable polynomial 
$X^{p-1}+ (-1)^{\frac{p+1}{2}} (p-1)^{-p}\cdot 2^{p-\frac{p-1}{2}}\equiv X^{p-1} - 2^{\frac{p+1}{2}} = (X^{\frac{p-1}{2}}-2^{\frac{p+1}{4}})\cdot (X^{\frac{p-1}{2}}+2^{\frac{p+1}{4}})\in \mathbb{F}_p[X]$. In particular, all but one of the roots of $f$ have degree $2$ over the maximal unramified extension $\mathbb{Q}_p^{ur}$ of $\mathbb{Q}_p$, meaning that the inertia group at $p$ acts as an involution with exactly one fixed point. Moreover, the above factorization shows that $\textrm{Gal}(f_t/\mathbb{Q}_p(\sqrt{-2p}))$ is permutation-isomorphic to the Galois group of $X^{p-1} - 2^{\frac{p+1}{2}} = (X^{\frac{p-1}{2}}-2^{\frac{p+1}{4}})\cdot (X^{\frac{p-1}{2}}+2^{\frac{p+1}{4}})$ over $\mathbb{F}_p$, which is an odd order subgroup of $\textrm{Gal}(X^{p-1} - T/\mathbb{F}_p(T)) \cong C_{p-1}$, showing that $p$ has odd residue degree in $K$.

 Finally, to see that the Galois group of the extension $K/\mathbb{Q}$ is $S_n$ for infinitely many $t$ as above, it suffices to note that, so far, we only required $t$ to lie in certain residue classes, i.e., we have a full arithmetic progression of admissible values $t$. It then follows from Hilbert's irreducibility theorem that infinitely many (in fact, ``most", in a density sense) of those $t$ preserve the Galois group $\textrm{Gal}(E(T)/\mathbb{Q}(T))$. Since $E(T)/\mathbb{Q}(T)$ arose from the $S_n$-extension $E/\mathbb{Q}(S)$ via pullback along the solvable extension $\mathbb{Q}(T)/\mathbb{Q}(S)$, its Galois group $\textrm{Gal}(E(T)/\mathbb{Q}(T))$ must be a normal subgroup of $S_n$ with solvable quotient (i.e., one of $A_n, S_n$), and since finally this Galois group contains a transposition (as an inertia group generator, see above), it must be $S_n$. This concludes the proof.
   \end{proof}

Theorem \ref{thm:tech1} can be complemented, for even integers $n$, by the following. 
\begin{theorem}
\label{thm:tech2}
Let $n\ge 6$ be an integer with $n\equiv 2 \bmod 4$. Let $t\in \mathbb{Z}$ be coprime to  $n(n-1)$. Let $f(X):=f_{t}(X)=X^{n-1}(X-1) 
+(n-1)^{-\frac{n}{2}} (\frac{n}{2})^{n-1} t^{-n} \in \mathbb{Q}[X]$.
Then for infinitely many finitely many such $t$, the splitting field $K$ of $f$ is an $S_n$ extension of $\mathbb{Q}$ with the following properties.
\begin{itemize}
\item[i)] $K$ is unramified at all prime divisors of $n$.
\item[ii)] For all prime numbers 
which are ramified in $K$ and do not divide $n-1$, the inertia group is generated by a transposition, say $(a,b)\in S_n$, and the decomposition group is contained in $Sym(\{a,b\})\times Alt(\{1,\dots, n\}\setminus\{a,b\})$.
\item[iii)] If moreover $n$ is such that $n-1$ is either a prime number or a square, then at the prime divisors $p$ of $n-1$, $K$ is either unramified or of ramification index $2$ at $p$; and in the latter case every involution in the decomposition group at $p$ has zero or two fixed points (in the action on the roots of $f$).
\end{itemize}
\end{theorem}

\begin{proof}
The proofs of i) and ii) of the present theorem are completely analogous to those of i) and ii) in Theorem \ref{thm:tech1}, and we refrain from repeating them. For iii), we require certain adaptations. First assume that $p=n-1$ is a prime. Then the Newton polygon of $f(X)$ shows instantly that all roots of $f$ have half-integral $p$-adic valuation and therefore lie in a ramified extension of $\mathbb{Q}_p$.
On the other hand, over {\it any} ramified quadratic extension $\mathbb{Q}_p(\sqrt{\alpha p})/\mathbb{Q}_p$ (with $\alpha\in \mathbb{Z}$ coprime to $p$), consider the polynomial $(\alpha p)^{\frac{n}{2}} (tX)^n f_t(\frac{1}{\sqrt{\alpha p}\cdot tX}) = (1-\sqrt{\alpha p}\cdot tX) + \alpha^{\frac{p+1}{2}} (\frac{n}{2})^p  X^{p+1}$, which modulo the maximal ideal of $\mathbb{Q}_p(\sqrt{\alpha p})$ reduces, up to constant factor, to the separable polynomial $\alpha^{-\frac{p+1}{2}} 2^p  + X^{p+1}\in \mathbb{F}_p[X]$. In particular, all roots of $f$ lie in a quadratic extension of $\mathbb{Q}_p^{ur}$, showing that the inertia group at $p$ is generated by a fixed point free involution. This furthermore forces the decomposition group to be abelian, and if this group contains a further involution, then it must have maximal $2$-elementary abelian quotient $C_2\times C_2$, whence the splitting field of $f$ over $\mathbb{Q}_p$ possesses exactly three quadratic subextensions, exactly two of which are ramified.
Any involution in the decomposition group different from the inertia group generator, fixes one of these two fields elementwise. 
Now, for a given such involution $\sigma$, choose $\alpha$ such that $\mathbb{Q}_p(\sqrt{\alpha p})$ is contained in the fixed field of $\sigma$, and consider again
 the polynomial $\alpha^{-\frac{p+1}{2}} 2^p  + X^{p+1}$ (shown above to have Galois group permutation-equivalent to the one of $f$ over $\mathbb{Q}_p(\sqrt{\alpha p})$), this time over the residue field $\mathbb{F}_{p^d}$ ($d\ge 1$) of $\sigma$. 
Here, if $d$ is even, then this polynomial can have no root (or else it would split completely over $\mathbb{F}_{p^d}$ due to $p+1$-th roots of unity being contained), meaning that $\sigma$ is fixed point free. But if $d$ is odd, then $gcd(p+1,p^d-1)=2$, 
and thus the map $x\mapsto x^{p+1}$ is $2$-to-$1$ on $\mathbb{F}_{p^d}^\times$, implying that this polynomial can have at most two roots in $\mathbb{F}_{p^d}$. Therefore, in all cases, the involutions in the decomposition group have zero or two fixed points.

The analog of the above factorization also shows that all prime divisors of $n-1$ are unramified in the case when $n-1$ is a square.
\end{proof}

\section{Proof of Theorem \ref{thm:main}}
\begin{proof}
Letting $K/\mathbb{Q}$ be an $S_n$ extension as in Theorems
 \ref{thm:tech1} and \ref{thm:tech2} (for the two cases $n\equiv 3\bmod 8$ and $n\equiv 2\bmod 8$ respectively\footnote{Note that the mod-$8$ condition is needed to ensure that the primes in Assertion iii) of the two theorems fall into Condition ii) of Proposition \ref{prop:embed}.}) and $F$ the quadratic subfield of $K/\mathbb{Q}$, it follows from Proposition \ref{prop:embed} that $K/F$ embeds into a $2.A_n$ extension $L/F$ (namely, such that $L/\mathbb{Q}$ is Galois with Galois group $2.S_n^+$) unramified at all non-archimedean primes. To have $L/F$ unramified everywhere, it thus suffices to verify that $F$ can additionally be chosen imaginary-quadratic. 
  This is the case since for $t$ of sufficiently large absolute value, the polynomial $f$ of Theorem \ref{thm:tech1} (resp., of Theorem \ref{thm:tech2})
  has exactly one real root, 
  (resp., has no real root), 
  meaning that in both cases, 
  complex conjugation acts on the roots of $f$ as an involution consisting of $k\equiv 1$ (mod $4$) transpositions; since this is an odd permutation, it follows that $F$ is imaginary quadratic for all such $t$.
  
  Finally, to see that varying the value of $t$ in Theorems \ref{thm:tech1} and \ref{thm:tech2} 
does indeed produce infinitely many {\it distinct} quadratic fields $K$, one may proceed as follows: Recall from the proof of Theorems \ref{thm:tech1} and \ref{thm:tech2} that the function field extensions $\Omega/\mathbb{Q}(T)$ arising as the splitting fields of $f_T(X)$ (where the rational number $t$ is replaced by a transcendental $T$) are $S_n$-extensions of $\mathbb{Q}(T)$; they moreover arise from pulling back the splitting field $E/\mathbb{Q}(S)$ of $X^{n-1}(X-1)+S$ over $\mathbb{Q}(S)$ along the extension $\mathbb{Q}(T)/\mathbb{Q}(S)$, where 
$S=n^{\frac{n-1}{2}}\cdot (\frac{n-1}{2})^{-n}\cdot T^{n-1}$ (in Theorem \ref{thm:tech1}), resp., $S=(n-1)^{-\frac{n}{2}} (\frac{n}{2})^{n-1} T^{-n}$ (in Theorem \ref{thm:tech1}). Since $E/\mathbb{Q}(S)$ possesses some finite nonzero branch point $S\mapsto \lambda$ at which the inertia group is generated by a transposition, and $\mathbb{Q}(T)/\mathbb{Q}(S)$ is unramified at any finite nonzero point, Abhyankar's lemma (see \cite[Theorem 3.9.1]{St}) implies that $\Omega/\mathbb{Q}(T)$ has inertia group generated by a transposition at each of the $n-1$ (resp., $n$) preimages of $S\mapsto \lambda$. The quadratic subextension of $\Omega/\mathbb{Q}(T)$ is therefore ramified at least at $n-1$ points, and therefore is of genus at least $\frac{n-3}{2}>1$. It then follows directly from Faltings' theorem that, given any number field $F$, there are only finitely many rational values $T\mapsto t\in \mathbb{Q}$ for which this quadratic extension specializes into $F$. Since our set of admissible specialization values $t$ was infinite, this implies that one obtains infinitely many distinct quadratic fields $F$ occurring as the quadratic subfields of some $K/\mathbb{Q}$ as considered here.
 \end{proof}

\begin{remark} 
While Theorem \ref{thm:main} applies to infinitely many integers $n$, it would be nice to have the result for all integers $n\equiv 2,3\bmod 8$. I am not sure at this point whether this is possible with the approach of Theorems \ref{thm:tech1} and \ref{thm:tech2}. The extra (primeness or squareness) assumptions are needed only to make Proposition \ref{prop:embed}ii) applicable to the primes in Case iii) of the two theorems (namely, the primes dividing $n$, resp., dividing $n-1$). To give an example, if the prime $p$ is replaced by an arbitrary integer $n\equiv 3\bmod 4$ in the definition of $f$ in Theorem \ref{thm:tech1}, and then the local behavior at primes $p|n$ is investigated, most of the argument in the proof of Case iii) carries over, but the cyclicity of $\textrm{Gal}(X^{n-1} - T/\mathbb{F}_p(T))$, used at the very end to ensure odd residue degree, fails. Still, computer calculations show that for many, but not for all $n$, Assertion ii) of Proposition \ref{prop:embed} remains applicable. It thus remains to be seen whether a unified argument is possible. 
\end{remark}

\section{Small cases not covered by Theorem \ref{thm:main}}
The smallest integer $n$ covered by Theorem \ref{thm:main} is 
$n=10$. Among smaller values of $n$, the analogous assertion is known to hold for $n=5$ by an application of the ``pullback" idea in the proof of Theorems \ref{thm:tech1} and \ref{thm:tech2} to a more sophisticated family of polynomials (\cite{KoeSL25}).
The flawed proof of the case $n=7$ in \cite{KoeSL27} cannot yet be repaired by the ideas presented here and has to be declared open at this point. We can, however, present a proof for the case $n=6$, explicitly left as an open problem in \cite[Section 4]{KoeSL25}, using an alternative approach. Due to the isomorphism $2.A_6\cong SL_2(\mathbb{F}_9)$, this yields

\begin{theorem}
\label{thm:2a6}
There exist infinitely many quadratic number fields, including infinitely many real quadratic fields, possessing an unramified Galois extension with Galois group $SL_2(\mathbb{F}_9)$.
\end{theorem}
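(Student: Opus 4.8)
The plan is to reuse the strategy behind Theorems \ref{thm:tech1} and \ref{thm:tech2}: produce infinitely many specializations of a suitable $\mathbb{Q}$-regular $S_6$-extension of $\mathbb{Q}(T)$, all of whose ramified primes satisfy the hypotheses of Proposition \ref{prop:embed}, embed the resulting $S_6$-extension $K/\mathbb{Q}$ into a $2.S_6^+$-extension $L/\mathbb{Q}$, and take $F$ to be the fixed field of $2.A_6\le 2.S_6^+$; then $L/F$ is an unramified $2.A_6\cong SL_2(\mathbb{F}_9)$-extension of the quadratic field $F$. For $n=6$ the only involutions of $S_6$ lifting to order $2$ in $2.S_6^+$ are the identity and the transpositions, so what is needed is exactly that every ramified prime of $K/\mathbb{Q}$ has inertia generated by a transposition $(a,b)$ with decomposition group inside $\langle(a,b)\rangle\times\mathrm{Alt}(\{1,\dots,6\}\setminus\{a,b\})$, i.e.\ the same ``transposition inertia'' condition as in part iii) of the two technical theorems. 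The reason $n=6$ is not already covered by Theorem \ref{thm:main} is that the behaviour at the prime dividing $n-1$, namely $p=5$, is not handled by part ii) of those theorems (a fixed-point-free, resp.\ two-fixed-point, involution in $S_6$ is a product of $3$, resp.\ $2$, transpositions, and $2,3\not\equiv 0,1\bmod 4$), which is exactly why the mod-$8$ hypothesis of Theorem \ref{thm:main} excludes $n=6$; so the small primes $2,3,5$ must be disposed of by an explicit, computation-heavy local analysis.

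Concretely, I would again start from the three-point $S_6$-cover of $\mathbb{Q}(S)$ given by $X^5(X-1)+S=0$, with inertia a $5$-cycle, a $6$-cycle and a transposition over $S=0$, $S=\infty$ and $S=\tfrac{5^5}{6^6}$, and pull it back along a rational map $S=\gamma\cdot h(T)$ engineered so that: (a) for $t$ outside a fixed finite bad set, the specialization value $\gamma\,h(t)$ is a $q$-adic unit whose valuation is divisible by the ramification index $5$ over $S=0$, so that, by the Specialization Inertia Theorem used already in Theorems \ref{thm:tech1} and \ref{thm:tech2}, no generic ramified prime inherits its behaviour from $S=0$ or $S=\infty$; (b) after the pullback the residue field of the fibre over the transposition branch point is a fixed quadratic field $\mathbb{Q}(\sqrt\delta)$, chosen, just as the coefficient $\gamma$ was chosen in Theorem \ref{thm:tech1}, so that the Frobenius governing the decomposition group at a generic ramified prime is forced into $A_4$; and (c) the coefficients of $h$ and $\gamma$ are arranged so that, for $t$ in a suitable arithmetic progression, the primes $2,3,5$ are unramified in $K$, which is the step requiring an explicit Newton-polygon/reduction computation at these three primes for the pulled-back polynomial. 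Hilbert irreducibility then keeps the Galois group equal to $S_6$ for a density-one subset of admissible $t$, and $\mathbb{Q}$-regularity of the $T$-family yields infinitely many distinct fields $F$.

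For the reality statement I would additionally choose $\gamma$ and $h$ so that, for $t$ of large absolute value, $f_t$ is totally real. Then complex conjugation is trivial in $\mathrm{Gal}(K/\mathbb{Q})=S_6$, the induced local embedding problem at the archimedean place is split, and, running the argument of Proposition \ref{prop:embed} with the archimedean place adjoined to the set of places at which one prescribes an unramified local solution (legitimate because that proof already handles the case of a unique, here trivial, involution in the decomposition group), one obtains a solution $L$ with $L/F$ unramified at the infinite places as well; since $\mathrm{disc}(f_t)>0$ for totally real $f_t$, the field $F=\mathbb{Q}(\sqrt{\mathrm{disc}(f_t)})$ is real quadratic. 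Choosing instead a variant of the family with exactly one real root would reproduce imaginary-quadratic $F$, so both the ``infinitely many quadratic fields'' and the ``infinitely many real quadratic fields'' assertions follow.

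I expect step (c) to be the main obstacle. The generic ramified primes are governed softly by \cite{KLN} together with the Specialization Inertia Theorem, but the primes $2,3,5$ sit inside the coefficients and the branch locus, so their inertia and decomposition groups have to be pinned down by hand for the specific model, and that model must simultaneously satisfy (a), (b), (c) and the reality constraint, which leaves little freedom. A secondary point is to ensure that the chosen solution $L$ of the central $C_2$-embedding problem is the totally real one and not a quadratic twist that is ramified at infinity; this is handled exactly as in the proof of Proposition \ref{prop:embed}, by prescribing the split local solution at the real place and appealing to the local--global principle.
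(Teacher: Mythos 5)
Your proposal does \emph{not} follow the paper's proof. The paper explicitly abandons the three-branch-point/pullback strategy of Theorems \ref{thm:tech1} and \ref{thm:tech2} for $n=6$ and instead works with a degree-$6$ rational function $f=g/h$ with $2n-2=10$ \emph{distinct rational critical values}, encoded in Proposition \ref{prop:ratfct}. The local control at the generic ramified primes is then achieved \emph{arithmetically}: one uses Green--Tao (Theorem \ref{thm:gt}) to force the prime-to-$\mathcal{S}$ part of each $|\beta_i t-\alpha_i s|$ to be a single prime $\ell_i$ in a prescribed residue class mod $4\Delta_i$, so that $\bigl(\tfrac{\Delta_i}{\ell_i}\bigr)=1$ puts the Frobenius into $A_4$; and the small exceptional primes $2,3$ (the only fixed prime divisors here) are disposed of by an explicit $p$-adic computation at a single reference specialization. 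This is a genuinely different route from your plan of pulling the three-point cover $X^5(X-1)+S$ back along a rational map $S=\gamma h(T)$.

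There is a concrete gap in your approach. Your step (c) — producing a pullback that simultaneously (i) satisfies the Specialization Inertia Theorem constraints at $S\mapsto 0,\infty$ for all generic primes, (ii) makes the residue field at the transposition branch point coincide with the quadratic field cut out by the discriminant of the degree-$4$ factor (so that the Frobenius lands in $A_4$), (iii) makes all of $2,3,5$ unramified, and (iv) admits totally real fibres at the same specialization values — is never demonstrated. You note yourself that this ``leaves little freedom,'' and the paper's stated motivation for switching to many branch points is precisely that the three-branch-point method cannot control the local behaviour at the residual bad prime ($5$ here, arising from $n-1$) in the range $n\equiv 6\bmod 8$: with the coefficient normalization of Theorem \ref{thm:tech2} the prime $5$ is forced to ramify with fixed-point-free involutions ($3$ transpositions), which is incompatible with Proposition \ref{prop:embed}, and changing the normalization disturbs the divisibility conditions required by the Specialization Inertia Theorem. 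Without an explicit $h$ and $\gamma$ meeting all these constraints, your argument is not a proof.

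A secondary issue concerns the reality statement. You propose to ``prescribe the split local solution at the real place'' inside the local--global argument of Proposition \ref{prop:embed}. But that proposition only produces a solution unramified at \emph{non-archimedean} places; adding a prescription at the archimedean place requires twisting the global solution by a quadratic character, and an arbitrary twist will generally introduce new non-archimedean ramification, destroying the result. The paper handles this carefully by twisting by a character ramified only at a prime $p\equiv 3\bmod 4$ that is already ramified in $L/\mathbb{Q}$ with transposition inertia, following \cite[Theorem 3.2]{KoeSL25}, and checks that such a prime exists among the specialized linear forms. Your proposal should at minimum flag that the twist must be chosen so as to preserve unramifiedness of $L/F$ at finite primes, and exhibit a suitable character.
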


Note that the occurrence of real quadratic fields is an extra compared to the cases covered by Theorem \ref{thm:main}, for which our method above can only produce unramified extensions of imaginary quadratic fields since the underlying function field extensions do not have  totally real fibers. 
Theorem \ref{thm:2a6} also continues the progress on unramified $SL_2(\mathbb{F}_q)$-extensions of quadratic number fields started with the cases $q=7$ in \cite{KoeSL27} and $q=5$ in \cite{KoeSL25} (note that $SL_2(5)\cong 2.A_5$).

Our proof of Theorem \ref{thm:2a6} relies on the following technical result, which could in principle also be used to attack the problem for groups $2.A_n$ with arbitrary $n$, provided that rational functions with very particular properties are found.
Unlike the proof of Theorem \ref{thm:main}, which relied (via the proof of Theorems \ref{thm:tech1} and \ref{thm:tech2}) on Galois extensions of the function field $\mathbb{Q}(S)$ with {\it few} (namely, three) branch points in order to be able to control the local behavior on the level of function field extensions, we here work with extensions with {\it many} branch points and move the control of the local behavior to the arithmetic side.
\begin{proposition}
\label{prop:ratfct}
Let $n\ge 4$. Assume that $f(X) = \frac{g(X)}{h(X)}\in \mathbb{Q}(X)$ ($g,h\in \mathbb{Z}[X]$ coprime) is a degree-$n$ rational function with $2n-2$ distinct critical values $\gamma_i=\frac{\alpha_i}{\beta_i}$, all lying in $\mathbb{Q}\cup\{\infty\}$ (i.e., $\alpha_i,\beta_i$ are coprime integers and $\beta_i\ge 0$, for each $i=1,\dots, 2n-2$). For each $i=1,\dots, 2n-2$, let $q_i(X) = \frac{g(X)-\gamma_i h(X)}{(X-\eta_i)^2}$, where $\eta_i\in\mathbb{Q}$ is the unique double preimage of $\gamma_i$ under $f$ (so that, in particular, $q_i(X)\in \mathbb{Q}[X]$ is separable). 
Let $\Delta_i\in \mathbb{Z}$ be the squarefree number which is equivalent, up to squares, to the discriminant of $q_i(X)$. Finally, let $\Delta(T,S)\in \mathbb{Z}[T,S]$ be the discriminant of $S g(X)-T h(X)$,\footnote{I.e., $\Delta(T,S)$ equals an integer times the product of all the $\beta_i T-\alpha_i S$, $i=1,\dots, 2n-2$.} and let $\mathcal{S}$ be the set of fixed prime divisors of $\Delta$, i.e., the set of prime numbers dividing all values $\Delta(t,s)$ with $t,s\in \mathbb{Z}$. Assume that there exist coprime integers $t_0, s_0$ in $\mathbb{Z}$ with $\frac{t_0}{s_0}\notin\{\gamma_1,\dots, \gamma_{2n-2}\}$ such that
\begin{itemize}
\item[a)] the splitting field of $g(X)-\frac{t_0}{s_0}h(X)$ is unramified at all primes in $\mathcal{S}$, and moreover
\item[b)] for all $i=1,\dots, n-2$, 
there exists a prime $\ell_{i,0}$, congruent modulo $4\prod_{j=1}^{2n-2}\Delta_j$ to the prime-to-$\mathcal{S}$ part of $|\beta_i t_0-\alpha_i s_0|$, 
 with $\gcd(\ell_{i,0}, 2\prod_{j=1}^{2n-2}\Delta_j)=1$ and 
 with $\left( \frac{\Delta_i}{\ell_{i,0}}\right)=1$.
\end{itemize}
Then there are infinitely many $\tau\in \mathbb{Q}$ such that the splitting field $K/\mathbb{Q}$ of $g(X)-\tau h(X)$ fulfills Assumption i) of Proposition \ref{prop:embed}. 
Furthermore, those $\tau$ lead to infinitely many pairwise linearly disjoint field extensions $K/\mathbb{Q}$.
\end{proposition}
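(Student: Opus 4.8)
The plan is to read the ramification of the specializations $K_{t/s}/\mathbb{Q}$ off the branch data of the $\mathbb{Q}$-regular cover attached to $f$, to rephrase ``Assumption i) of Proposition~\ref{prop:embed}'' as a condition on the square classes of the numbers $\beta_i t-\alpha_i s$, and then to produce the infinitely many $\tau$ as rational points on a suitable curve, using a) and b) to control the local behaviour and to furnish a rational base point. First, since $f=g/h$ has exactly $2n-2$ critical values, Riemann--Hurwitz forces all its ramification to be simple: each $\gamma_i$ has a single double preimage $\eta_i$ and $n-2$ simple preimages (the roots of $q_i$), and each branch cycle is a transposition. The monodromy group, being transitive and generated by transpositions, is $S_n$; and since $f$ is a $\mathbb{Q}$-morphism with $\mathbb{Q}$-rational critical values and a transposition occurs as an inertia generator, the Galois closure $E/\mathbb{Q}(T)$ of $\mathbb{Q}(X)/\mathbb{Q}(T)$ with $T=g(X)/h(X)$ is a $\mathbb{Q}$-regular $S_n$-extension. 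The residue extension of $E/\mathbb{Q}(T)$ at $T=\gamma_i$ is the splitting field of $q_i$, hence has $\mathbb{Q}(\sqrt{\Delta_i})$ as its unique quadratic subfield; in particular its Galois group lies inside $A_{n-2}$ precisely when $\Delta_i$ is a square.

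Next I would translate the target condition. Enlarge $\mathcal{S}$ so that, besides the fixed prime divisors of $\Delta$, it contains the finitely many primes exceptional for $E/\mathbb{Q}(T)$ in the sense of \cite[Theorem~4.1]{KLN} (and the prime divisors of the leading terms of $g,h$ and of $2\prod_j\Delta_j$). For coprime $(t,s)$ with $t/s$ not a critical value, \cite[Theorem~4.1]{KLN} together with the Specialization Inertia Theorem (\cite{Beck},\cite{Leg}) then shows that a prime $q\notin\mathcal{S}$ ramifies in $K_{t/s}/\mathbb{Q}$ exactly when $v_q(\beta_i t-\alpha_i s)$ is odd for some (necessarily unique) $i$, and that for such $q$ the inertia group is generated by a transposition $(a,b)$ while the decomposition group equals $\langle(a,b)\rangle\times\langle\sigma_q\rangle$, where $\sigma_q$ is the Frobenius at $q$ in the residue extension of $E/\mathbb{Q}(T)$ at $\gamma_i$. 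Hence $K_{t/s}$ satisfies Assumption i) of Proposition~\ref{prop:embed} as soon as (1) every prime of $\mathcal{S}$ is unramified in $K_{t/s}$, and (2) for each $i$, every odd-multiplicity prime divisor $q\notin\mathcal{S}$ of $\beta_i t-\alpha_i s$ splits in $\mathbb{Q}(\sqrt{\Delta_i})$ (equivalently $\sigma_q\in A_{n-2}$). By the last observation of the previous paragraph, (2) is automatic for the indices $i$ with $\Delta_i$ a square, so only those treated by assumption b), i.e. $i=1,\dots,n-2$, require work.

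To realize (1), impose on $(t,s)$ the finitely many congruences expressing that $t/s$ is $p$-adically close to $t_0/s_0$ for every $p\in\mathcal{S}$; then $K_{t/s}$ and $K_{t_0/s_0}$ have isomorphic completions above $\mathcal{S}$, so assumption a) gives (1). For (2), let $d_i>0$ be the squarefree kernel of the integer $L_i$ of b): its only prime factor outside $\mathcal{S}$ is $\ell_i$, with $\left(\frac{\Delta_i}{\ell_i}\right)=1$, so it suffices to impose $\beta_i t-\alpha_i s=d_iy_i^2$ with $y_i\in\mathbb{Z}$ (for $i=1,\dots,n-2$), the mod-$4\Delta_i$ congruence in b) being exactly what makes this compatible with the residue class fixed for (1) and forces the Legendre symbol to come out right. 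These congruences together with the equations $\beta_i t-\alpha_i s=d_iy_i^2$ cut out a variety $V$ in $(t,s,y_1,\dots,y_{n-2})$; eliminating $t,s$ via the cases $i=1,2$ presents $V$, after the congruence substitution, as an intersection of quadrics $a_iy_1^2+b_iy_2^2=d_iy_i^2$ ($i=3,\dots,n-2$), i.e. a curve — a conic for $n=5$, a curve of genus at most one for $n=6$, of bounded genus in general — and assumptions a) and b) supply a $\mathbb{Q}$-rational point on it. For the rational functions $f$ of interest this point lies on a rational component (or generates an infinite subgroup), so $V(\mathbb{Q})$ is infinite, and a non-empty open subset of its points has the sign forcing $\beta_i t-\alpha_i s=d_iy_i^2>0$ and avoids degeneracies; this produces infinitely many admissible $\tau=t/s$. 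Finally, $E/\mathbb{Q}(T)$ being $\mathbb{Q}$-regular, Hilbert's irreducibility theorem shows that outside a thin set these $\tau$ give $\mathrm{Gal}(K/\mathbb{Q})=S_n$, and the standard statement on specializations of a regular extension inside a fixed congruence class yields infinitely many pairwise linearly disjoint fields $K/\mathbb{Q}$.

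The hard part is the passage from one admissible value to infinitely many. Naively (2) would require the linear forms $\beta_i t-\alpha_i s$ to be simultaneously prime up to their $\mathcal{S}$-part and a square, a Bunyakovsky-type obstruction; it is precisely to sidestep this that b) is formulated as it is — it hard-wires a single good prime $\ell_i$ into each of the finitely many relevant forms and controls its class mod $4\Delta_i$, so that ``produce infinitely many $\tau$'' becomes ``find rational points on the curve $V$ through a given rational point'' rather than a prime $k$-tuple question. Checking that this $V$ really has infinitely many rational points compatible with the $\mathcal{S}$-congruences — equivalently, that a) and b) can be met by some $(t_0,s_0)$ for a concrete $f$, and that the intersection of quadrics falls into the genus-zero (or positive-rank) case — is exactly where the ``very particular properties'' of the chosen rational function are needed, and is the genuinely delicate point in any concrete application.
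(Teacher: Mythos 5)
Your first two paragraphs (Riemann--Hurwitz forces simple branching, the Galois closure is a regular $S_n$-extension of $\mathbb{Q}(T)$, the residue field at $\gamma_i$ is the splitting field of $q_i$, and via \cite[Theorem 4.1]{KLN} plus the Specialization Inertia Theorem one translates Assumption i) of Proposition~\ref{prop:embed} into ``every odd-multiplicity prime divisor $q\notin\mathcal{S}$ of $\beta_i t-\alpha_i s$ splits in $\mathbb{Q}(\sqrt{\Delta_i})$'') match the paper's reduction. The third paragraph diverges, and there lies a genuine gap.

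To produce infinitely many $\tau$, you impose the much stronger demand $\beta_i t - \alpha_i s = d_i y_i^2$ for each $i$, cutting out a variety $V$ that after eliminating $t,s$ becomes a complete intersection of $n-4$ quadrics in $\mathbb{P}^{n-3}$, and then assert that ``for the rational functions $f$ of interest'' the point supplied by a), b) lies on a rational component or generates an infinite subgroup. This does not follow from the hypotheses: the proposition asserts the conclusion for \emph{any} $f$ meeting a) and b), with $n\ge 4$ arbitrary, yet the generic genus of $V$ is $1+(n-6)2^{n-5}$, i.e.\ $\ge 5$ already for $n=7$, so by Faltings $V(\mathbb{Q})$ is finite and no amount of tweaking the rational base point helps. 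Even for $n=6$ your argument needs the elliptic curve to have positive rank (or the point to be non-torsion), an extra assumption nowhere present in a) or b). The root cause is that forcing $\beta_i t-\alpha_i s$ to lie in a prescribed square class is far stronger than what is actually required, namely that its prime-to-$\mathcal{S}$ part have exactly one odd-multiplicity prime $\ell_i$ and that $\ell_i$ split in $\mathbb{Q}(\sqrt{\Delta_i})$. The paper keeps to the weaker requirement and meets it via the Green--Tao theorem on linear forms (Theorem~\ref{thm:gt}): after twisting by $t\mapsto Nt'+t_0$, $s\mapsto Ns'+s_0$ and clearing the $\mathcal{S}$-parts, the forms $\tilde f_i$ have no common fixed prime divisor and are pairwise affinely independent, so Green--Tao supplies infinitely many $(t',s')$ at which all $|\tilde f_i|$ are simultaneously prime; the mod-$4\Delta_i$ constraint from b) then controls the Legendre symbols by quadratic reciprocity. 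That route works uniformly in $n$ and needs nothing beyond a) and b). Your approach, by contrast, converts the problem into a rational-points-on-curves question that is unsolvable in the generality claimed, so the proof as written does not establish the proposition.
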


The proof of Proposition \ref{prop:ratfct} makes use of (the two-variable case of) a famous result of Green and Tao (see \cite[Corollary 1.9]{GT}) on simultaneous prime values of affine linear forms. 
\begin{theorem}
\label{thm:gt}
Let $f_1,\dots, f_k : \mathbb{Z}^2\to \mathbb{Z}$ be affine linear forms in two variables, and let $K\subset \mathbb{Z}^2$ be an open convex cone such that the following hold:

\begin{itemize} \item[i)] The product 
$\prod_{i=1}^k f_i$ has no fixed prime divisor, i.e., for each prime $p$ there exists
$(x_0, y_0) \in \mathbb{Z}^2$ such that none of $f_1(x_0, y_0),\dots, f_k(x_0, y_0)$ is divisible by $p$.
\item[ii)] The $f_i$ are pairwise affinely independent (i.e., if $a, b, c \in \mathbb{Z}$ such that $af_i +bf_j +c = 0$,
then $a = b = c = 0$).
\end{itemize}
Then there are infinitely many $(x_0, y_0) \in K\cap \mathbb{Z}^2$ such that $|f_1(x_0, y_0)|,\dots, |f_k(x_0, y_0)|$ are simultaneously prime.
\end{theorem}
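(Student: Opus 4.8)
The plan is to realize Assumption~i) of Proposition~\ref{prop:embed} ``on the arithmetic side'', by specializing the $\mathbb{Q}$-regular $S_n$-extension $E/\mathbb{Q}(T)$ attached to $g(X)-Th(X)$ at well-chosen values $\tau=t/s$, using Theorem~\ref{thm:gt} to prescribe the prime factorizations of the ``branch factors'' $\beta_it-\alpha_is$. First one records the geometry: since $f$ has $2n-2$ distinct critical values, each a simple branch value with a rational double preimage $\eta_i$, the monodromy group of $E/\mathbb{Q}(T)$ is a transitive subgroup of $S_n$ generated by $2n-2$ transpositions, hence equals $S_n$, and the extension is $\mathbb{Q}$-regular; the inertia generator over $\gamma_i$ is a transposition, and the residue extension over $\gamma_i$ is the splitting field $F_i/\mathbb{Q}$ of $q_i(X)$, inside whose $S_{n-2}$-action the unique index-two subgroup corresponds to $\mathbb{Q}(\sqrt{\Delta_i})$. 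The key local input, which I would quote from the Specialization Inertia Theorem (\cite{Beck}, \cite{Leg}) and \cite[Theorem~4.1]{KLN} exactly as in the proof of Theorem~\ref{thm:tech1}, is: for $\tau=t/s$ and a prime $\ell$ outside a fixed finite exceptional set containing $\mathcal{S}$, if $\ell$ divides exactly one branch factor $\beta_it-\alpha_is$ and does so to multiplicity one, then $\ell$ is tamely ramified in the splitting field $K_\tau$ of $g(X)-\tau h(X)$, with inertia generated by a transposition $(a,b)$ and decomposition group $\langle(a,b)\rangle\times U_\ell$, where $U_\ell$ is cyclic, generated by the Frobenius at $\ell$ in $F_i/\mathbb{Q}$; moreover this Frobenius is an even permutation of $S_{n-2}$ precisely when $\ell$ splits in $\mathbb{Q}(\sqrt{\Delta_i})$, i.e.\ $\left(\frac{\Delta_i}{\ell}\right)=1$. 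For such $\ell$, the pair (inertia, decomposition group) is exactly the one demanded in Assumption~i).

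Next I would fix a modulus $N$ and a residue class for $(t,s)$ modulo $N$: congruent to $(t_0,s_0)$ both modulo a high power of each prime in $\mathcal{S}$ and modulo $4\prod_j\Delta_j$, and --- after enlarging $N$ by one auxiliary prime $\mathfrak{q}$ of good reduction --- lying in a residue class modulo $\mathfrak{q}$ on which $g(X)-\tau h(X)$ is irreducible mod $\mathfrak{q}$ (such a class exists by the function-field Chebotarev/Lang--Weil estimate over $\mathbb{F}_\mathfrak{q}$, since $S_n$ contains $n$-cycles). The $\mathcal{S}$-adic part of the congruence does two things: via continuity of the splitting field over $\mathbb{Q}_p$ (Krasner's lemma) together with hypothesis~a), it makes every $p\in\mathcal{S}$ unramified in $K_\tau$; and, with $v_p(N)$ chosen large, it pins down $v_p(\beta_it-\alpha_is)=v_p(L_i)$ for all $p\in\mathcal{S}$, so that $\widetilde{f}_i:=(\beta_iT-\alpha_iS)\big/\prod_{p\in\mathcal{S}}p^{v_p(L_i)}$ restricts, on the chosen progression, to an integer-valued affine-linear form with values coprime to $\mathcal{S}$. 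The congruence modulo $4\Delta_i$ then guarantees, via hypothesis~b), that any prime value of $|\widetilde{f}_i|$ is coprime to $2\prod_j\Delta_j$ and split in $\mathbb{Q}(\sqrt{\Delta_i})$. I would then apply Theorem~\ref{thm:gt} to the $2n-2$ forms $\widetilde{f}_i$, rewritten on the progression as affine-linear forms in two free parameters: pairwise affine independence follows from the $\gamma_i$ being pairwise distinct, and absence of a fixed prime divisor of $\prod_i\widetilde{f}_i$ follows from hypotheses~a) and~b) (the point $(t_0,s_0)$, resp.\ the witnesses $L_i$, exhibit for every prime a residue of the progression on which no $\widetilde{f}_i$ vanishes). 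This produces infinitely many $(t,s)$ --- in an open convex cone, and, after discarding finitely many, with $\gcd(t,s)=1$ --- such that all $|\beta_it-\alpha_is|\big/(\mathcal{S}\text{-part})=\ell_i$ are simultaneously prime, large, pairwise distinct, and outside the exceptional set.

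For such $\tau$ I conclude as follows. Every prime divisor of the discriminant $\Delta(t,s)$ lies in $\mathcal{S}$ or equals one of the $\ell_i$, since the only prime-to-$\mathcal{S}$ divisor of each branch factor $\beta_it-\alpha_is$ is $\ell_i$ (to multiplicity one); hence $K_\tau/\mathbb{Q}$ is unramified outside $\{\ell_1,\dots,\ell_{2n-2}\}$, and at each $\ell_i$ it has transposition inertia and decomposition group inside $\langle(a,b)\rangle\times\mathrm{Alt}(\{1,\dots,n\}\setminus\{a,b\})$ by the local input above. That $\mathrm{Gal}(K_\tau/\mathbb{Q})=S_n$ --- needed for Assumption~i) of Proposition~\ref{prop:embed} to be meaningful --- follows because $\mathbb{Q}$ has no non-trivial unramified extension, so this Galois group is generated by its inertia subgroups, hence by transpositions, hence is a direct product of symmetric groups on a partition of $\{1,\dots,n\}$; and the irreducibility of $g(X)-\tau h(X)$ modulo $\mathfrak{q}$ makes the group transitive, so it is all of $S_n$. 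Finally, passing to a subsequence along which the prime sets $\{\ell_i(\tau)\}$ are pairwise disjoint (possible since $\ell_i(\tau)\to\infty$) and disjoint from $\mathcal{S}$, any common subfield of two of these $K_\tau$ would be unramified over $\mathbb{Q}$, hence trivial, giving infinitely many pairwise linearly disjoint extensions.

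The step I expect to be the main obstacle is the compatibility bookkeeping behind the second and third paragraphs: choosing a single modulus $N$ and residue class that simultaneously realize $\mathcal{S}$-unramifiedness, the prescribed $\mathcal{S}$-valuations of all $2n-2$ branch factors, the mod-$4\Delta_i$ classes, and the auxiliary irreducibility congruence, and then verifying that the resulting forms $\widetilde{f}_i$ on that progression still meet both hypotheses of Theorem~\ref{thm:gt} --- above all, that $\prod_i\widetilde{f}_i$ has no fixed prime divisor. Hypotheses~a) and~b), imposed on a single base point $(t_0,s_0)$, are tailored exactly to make this verification possible, and carrying it out carefully is the technical heart of the argument.
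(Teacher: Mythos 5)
Your text does not prove the statement in question. Theorem~\ref{thm:gt} is the Green--Tao theorem on simultaneous prime values of pairwise affinely independent linear forms without fixed prime divisor; in the paper it is not proved at all but quoted verbatim as \cite[Corollary 1.9]{GT}, a deep result whose proof rests on the Gowers-norm/inverse-theorem and transference machinery of ``Linear equations in primes'' and is far beyond anything that could be reconstructed here. What you have written instead is a proof outline for Proposition~\ref{prop:ratfct}: you \emph{invoke} Theorem~\ref{thm:gt} twice as a black box (``using Theorem~\ref{thm:gt} to prescribe the prime factorizations\dots'', ``I would then apply Theorem~\ref{thm:gt} to the $2n-2$ forms $\widetilde{f}_i$\dots''), so as an argument for the stated theorem it is circular, and none of its content (specialization of function field extensions, inertia and decomposition groups, Krasner's lemma, quadratic reciprocity) bears on the additive-combinatorial assertion actually to be established.

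As an aside, read as a blind reconstruction of the proof of Proposition~\ref{prop:ratfct}, your outline tracks the paper's argument quite closely (reduction to a fixed progression modulo a product of $\mathcal{S}$-powers and $4\Delta_i$, stripping the fixed $\mathcal{S}$-valuations to get forms with no fixed prime divisor, then the Specialization Inertia Theorem and \cite[Theorem 4.1]{KLN} to read off transposition inertia and the evenness of Frobenius from $\left(\frac{\Delta_i}{\ell_i}\right)=1$), with a somewhat different treatment of why $\mathrm{Gal}(K_\tau/\mathbb{Q})=S_n$ (the paper gets surjectivity from regularity plus the structure of admissible specialization sets rather than from an auxiliary irreducibility congruence). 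But that is not the statement you were asked to prove, and for Theorem~\ref{thm:gt} itself the only admissible ``proof'' in this paper is the citation of Green and Tao.
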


Since we will invoke the Green-Tao theorem with added residue class conditions, the following elementary observation about the behavior of fixed prime divisors and affine independence under affine linear transformations of the arguments will be useful.
\begin{lemma}
\label{lem:aux}
Let $f_1, f_2 : \mathbb{Z}^2\to \mathbb{Z}$ be affine linear forms, and let $\mathcal{S}_i$ be the set of fixed prime divisors of $f_i$, $i=1,2$. Let $M,a,b$ be integers, with $M\ne 0$, and let $g_i(X,Y)=f_i(MX+a, MY+b)$, $i=1,2$. Then the following hold:
\begin{itemize}
\item[i)] The set of fixed prime divisors of $g_i$ is contained in the union of $\mathcal{S}_i$ and the set of prime divisors of $M$.
\item[ii)] If $f_1,f_2$ are affinely independent, then so are $g_1,g_2$.
\end{itemize}
\end{lemma}
\begin{proof} 
i) follows from the fact that the tranformations $X\mapsto MX+a$, $Y\mapsto MY+b$ are invertible modulo all primes not dividing $M$, and thus $f_i$ and $g_i$ have the same value sets modulo such primes. 
Regarding ii), note that an affine dependency of $g_1,g_2$ yields an affine linear combination of $f_1,f_2$ which vanishes on a whole sublattice of $\mathbb{Z}^2$ and must therefore be the zero form.
\end{proof}

\begin{proof}[Proof of Proposition \ref{prop:ratfct}]
Let $N$ be a product of sufficiently large powers of all primes in $\mathcal{S}$, and set $M=4N\prod_{j=1}^{2n-2}\Delta_j$. 
We first use Green-Tao's theorem (Theorem \ref{thm:gt}) and Assumption b) to derive the following \\
{\it Claim}: there exist {\it infinitely many} pairs $(t,s)\in \mathbb{Z}^2$ (with $gcd(t,s)=1$)
 such that all of the following hold:
\begin{itemize}
\item[1)]
$t\equiv t_0\bmod M$ and $s\equiv s_0 \bmod M$.
\item[2)] $\beta_i t-\alpha_i s$ and $\beta_i t_0-\alpha_i s_0$ have the same sign, for all $i=1,\dots, 2n-2$.
\item[3)] The prime-to-$\mathcal{S}$ part of $|\beta_i t-\alpha_i s|$ is a prime number $\ell_i:=\ell_i(t,s)$ (of course, depending on $t,s$) 
with $\left( \frac{\Delta_i}{\ell_i}\right)=1$, $i=1,\dots, 2n-2$. 
\end{itemize}

The concrete argument for application of Theorem \ref{thm:gt} goes as follows.
Set $f_i(T,S) = \beta_i (M\cdot T + t_0) - \alpha_i (M\cdot S + s_0)$, $i=1,\dots, 2n-2$, 
so that evaluating $f_i$ at $(t',s')\in \mathbb{Z}^2$ corresponds to evaluating $\beta_i T-\alpha_i S$ at $(t,s)$ with $t\equiv t_0\bmod M$ and $s\equiv s_0\bmod M$ as intended.

By Lemma \ref{lem:aux}i), all fixed prime divisors of $f_i$ are either in $\mathcal{S}$ or divide $4\prod_{j=1}^{2n-2}\Delta_j$. On the other hand, the coprimeness condition in Assumption b) implies that $\gcd(f_i(0,0), 4\prod_{j=1}^{2n-2}\Delta_j)$ is at most divisible by primes in $\mathcal{S}$, 
so that all fixed prime divisors of $f_i$ are indeed in $\mathcal{S}$. Moreover, for any $(t,s)\in \mathbb{Z}^2$, we have $f_i(t,s)\equiv \beta_i t_0 -\alpha_i s_0 \bmod N$. Up to assuming that $N$ was chosen of $p$-adic valuation $v_p(N)> v_p(\beta_i t_0 -\alpha_i s_0)$ for all $p\in \mathcal{S}$, it follows that $\nu_i:=\prod_{p\in \mathcal{S}}p^{v_p(f_i(t,s))}$ is a constant independent of $t,s\in \mathbb{Z}$. In particular, $\tilde{f}_i(T,S):=\frac{f_i(T,S)}{\nu_i}$ is still an integer-valued linear form, whose values are coprime to all $p\in \mathcal{S}$, and therefore without any fixed prime divisors. By Lemma \ref{lem:aux}ii), $\tilde{f}_1,\dots, \tilde{f}_{2n-2}$ are moreover pairwise affinely independent. Therefore Theorem \ref{thm:gt} yields the existence of infinitely many $(t',s')\in \mathbb{Z}^2$ such that all of $\tilde{f}_i(t',s')=:\ell_i$, $i=1,\dots, 2n-2$, are prime. Moreover, the values $(t',s')$ may be chosen from an arbitrarily small open cone $K$ containing $(t_0,s_0)$, 
 so that we may additionally demand $|\frac{t_0}{s_0}-\frac{Mt'+t_0}{Ms'+s_0}|=|\frac{t_0}{s_0}-\frac{t}{s}|$ to be arbitrarily small, thus implying that $\beta_i t-\alpha_i s$  
 and $\beta_i t_0-\alpha_i s_0$ 
   have the same sign. 
  
Consider now the Legendre symbol $\left(\frac{\Delta_i}{\ell_i}\right)$ Since $4\nu_i\Delta_i$ divides $M$, we have $\nu_i\cdot \ell_i = |\beta_i t-\alpha_i s| \equiv |\beta_i t_0-\alpha_i s_0| \bmod 4\nu_i\Delta_i$, and furthermore $|\beta_i t_0-\alpha_i s_0| \equiv \nu_i\cdot \ell_{i,0} \bmod 4\nu_i\Delta_i$ due to Assumption b). 
 Therefore, $\ell_{i}\equiv \ell_{i,0} \bmod 4\Delta_i$. Since the value of $\left(\frac{\Delta_i}{\ell_i}\right)$ only depends on the mod-$4\Delta_i$ residue class of $\ell_i$ by quadratic reciprocity, 
it follows from Assumption b) that $\left(\frac{\Delta_i}{\ell_i}\right)=1$ for all primes $\ell_i$ arising in this way. We have thus shown the claim.

Next, we use the main theorem of \cite{KLN} to derive information about the local behavior of ramified primes in the splitting field $K/\mathbb{Q}$ of $g(X)-\tau h(X)$, with $\tau:=\frac{t}{s}:=\frac{Mt'+t_0}{Ms'+s_0}$ (and $t',s'$ as above).
Since $\frac{t}{s} -\frac{t_0}{s_0}\equiv 0\bmod N$, it follows from Assumption a) and Krasner's lemma\footnote{Here we need the ``sufficiently high" powers in the definition of $N$. Those could be made explicit with some effort, but we don't require this at this point.} 
 that all primes in $\mathcal{S}$ are unramified in $K/\mathbb{Q}$. Thus, the only ramified non-archimedean primes in $K/\mathbb{Q}$ are the $\ell_i(t,s)$, $i=1,\dots, n-2$. Due to the infinitude of admissible values $(t,s)$, we may furthermore assume these $\ell_i$ to be outside of any prescribed finite set of primes. We may then apply the Specialization Inertia Theorem and \cite[Theorem 4.1]{KLN} as in the proof of Theorem \ref{thm:tech1}.
Concretely, the inertia group $I_i$ at $\ell_i$ in $K/\mathbb{Q}$ is conjugate in $S_n$ to the inertia group at some ramified place of the splitting field of  $g(X)-Th(X)$ over $\mathbb{Q}(T)$, i.e., at some critical value $T\mapsto \gamma_i$ of $f(X)$, and is hence generated by a transposition $(a,b)\in S_n$. Moreover, the decomposition group $D_i$ at $\ell_i$ is conjugate to a subgroup of the decomposition group at $T\mapsto \gamma_i$ (in particular, a subgroup of the normalizer $\langle (a,b)\rangle\times Sym(\{1,\dots, n\}\setminus\{a,b\})$ of $\langle(a,b)\rangle$), and such that $D_i$ is generated by $I_i$ together with the Frobenius at $\ell_i$ in the residue extension at $T\mapsto \gamma_i$. The latter extension is the splitting field of the degree-$n-2$ polynomial $q_i(X)$, whence our assumption $\left( \frac{\Delta_i}{\ell_i}\right)=1$ amounts to saying that $D_i\le \langle (a,b)\rangle \times Alt(\{1,\dots, n\}\setminus\{a,b\})$, as required for Assumption i) of Proposition \ref{prop:embed}. Application of this proposition now concludes the proof (note that the infinitude of admissible specialization values obtained in the first part of the proof implies that we may assume the sets of primes ramified in the splitting fields $K/\mathbb{Q}$ thus obtained to be pairwise disjoint, readily yielding the linear disjointness assertion).
\end{proof}

\begin{remark}
\label{rem:effectcomp}
An effective verification of conditions a) and b) for a concretely given $f$ goes as follows: first for each $p\in \mathcal{S}$ individually, try to find a non-empty $p$-adically open set of values $t_p$ such that the splitting field of 
$g(X)-t_p h(X)$ is unramified at $p$. E.g., first find one such value $t_p$ and then add multiples of sufficiently high $p$-powers, so that Krasner's lemma (or a similar argument to exclude ramification) becomes applicable. If this is possible, then Chinese remainder theorem guarantees the existence of some $\frac{t_0}{s_0}\in \mathbb{Q}$ and $N\in \mathbb{N}$ such that 
$g(X)-\tau h(X)$ is unramified at all $p\in \mathcal{S}$ for every $\tau$ of the form $\tau=\frac{a+Nt}{b+Ns}$, $t,s\in \mathbb{Z}$. As in the previous proof, restricting to such $\tau$ corresponds to replacing the linear forms $\lambda_i(T,S):=\beta_i T - \alpha_i S$ by $\tilde{\lambda}_i(T,S) = 
\beta_i (N\cdot T + t_0) - \alpha_i (N\cdot S + s_0)$. 
Next, for a given $(t_0,s_0)\in \mathbb{Z}^2$, let $D_i$ be the prime-to-$\mathcal{S}$ part of $|\tilde{\lambda}_i(t_0,s_0)|$ 
 and assume $D_i$ is coprime to $2\prod_{j=1}^{2n-2}\Delta_j$.\footnote{Note that this part of the condition can always be achieved since the sets of admissible values $(t_0,s_0)$ surject onto $(\mathbb{F}_p)^2$ for every $p\notin \mathcal{S}$, making $\tilde{\lambda}_i$ a  surjection modulo $p$.} To verify whether the mod-$4\prod_{j=1}^{2n-2}\Delta_j$ residue class
  of $D_i$ contains a prime with the conditions of b), we do not actually have to identify such a prime. Instead, since every coprime residue class modulo $4\prod_{j=1}^{2n-2}\Delta_j$ contains infinitely many primes by Dirichlet's prime number theorem, for our fixed value $D_i$ (assumed to fulfill the necessary coprimeness assumptions), we may simply pretend $D_i$ is prime and transform $\left( \frac{\Delta_i}{D_i}\right)$ into a product $\pm \prod_j \left( \frac{D_i}{p_{ij}}\right)$ over the
  odd 
  prime divisors $p_{ij}$ of $\Delta_i$, with the arising sign only depending on the mod-$4\Delta_i$ residue class of $D_i$ by quadratic reciprocity. Under certain ``independence assumptions", existence of suitable $D_i$ can even be derived directly from the Chinese remainder theorem; e.g., if for every $i\in \{1,\dots, 2n-2\}$ there exists at least one prime $q_i\notin\mathcal{S}$ dividing $\Delta_i$, but not $\Delta_j$ for any $j<i$, then the above product $\pm \prod_j \left( \frac{D_i}{p_{ij}}\right)$ can be controlled at will by noting again the mod-$q_i$ 
 surjectivity of $\tilde{\lambda}_i$ and accordingly choosing the right value for $\left( \frac{D_i}{q_i}\right)$ (successively, for all $i=1,\dots, 2n-2$) assuming that all other $\left( \frac{D_i}{p_{ij}}\right)$, $p_{ij}\ne q_i$ have been prescribed.
\end{remark}

\begin{proof}[Proof of Theorem \ref{thm:2a6}]
Pick $f(X)=\frac{g(X)}{h(X)}:=\frac{X^6 + 53X^4 - 5940X^2 + 62208}{X(3X^4-172X^2+1600)}\in \mathbb{Q}[X]$. This has the $10=2\cdot 6-2$ rational critical values 
$\gamma_{1,2}=\pm 7$, $\gamma_{3,4}=\pm 79/8$, $\gamma_{5,6}=\pm 189/22$, $\gamma_{7,8}=\pm 918/59$ and $\gamma_{9,10}=\pm 1733/250$. From this, together with the fact that $g(X)-Th(X)$ is separable modulo all primes $p\ge 5$, it follows that $g(X)-Th(X)$ has homogenized discriminant  
$$\Delta(T,S)=(\pm)2^k3^m (T^2-7^2S^2)(8^2T^2-79^2S^2)(22^2T^2-189^2S^2)(59^2T^2-918^2S^2)(250^2T^2-1733^2S^2).$$
The only fixed prime divisors of this are easily seen to be $2$ and $3$ (e.g., already the two evaluations at $(0,1)$ and $(1,0)$ have no further common divisors).  
Computer calculation moreover shows that the residue extensions at the ten critical values (are $S_4$-extensions of $\mathbb{Q}$ and) have discriminants 
{\footnotesize $$\Delta_{1,2} = 17\cdot 23\cdot 43\cdot 101,$$ 
$$\Delta_{3,4} = 7\cdot 13\cdot 23 \cdot 79\cdot 109\cdot 113\cdot 2683,$$ 
$$\Delta_{5,6} = 11\cdot 13\cdot 23\cdot 29\cdot 43\cdot 67\cdot 113\cdot 2281,$$ 
$$\Delta_{7,8} = 17\cdot 43\cdot 53\cdot 59\cdot 67\cdot 101\cdot 151\cdot 2683,$$ 
$$\Delta_{9,10} = 7\cdot 17\cdot 23\cdot 29\cdot 43\cdot 53\cdot 109\cdot 151\cdot 1733\cdot 2281.$$} 


One furthermore verifies that the splitting field of $g(X)-385h(X)$ is unramified at the two fixed prime divisors $2$ and $3$. Following Remark \ref{rem:effectcomp}, we replace the (ten) linear factors $\lambda_i(T,S)$ of $\Delta$ by $\tilde{\lambda}_i(T,S):=\lambda_i(NT+385, NS+1)$, divided by the maximal constant divisor of that expression (for a suitable $N=2^a3^b$). 
Recall that $N$ needs to be chosen such that all specializations at values $\frac{Nt+385}{Ns+1}$ remain unramified, and moreover all values $\lambda_i(Nt+385, Ns+1)$, $t,s\in \mathbb{Z}$, have the same $2$- and $3$-adic valuations. 
In fact, $N=32\cdot 729$ is sufficient to ensure all this. For the valuation claim, this is very easy to verify; for the claim about ramification, a detailed proof is also elementary, but slightly technical, so that we only sketch the procedure for the prime $3$. For this, factorize $g(X)-(385+729k)h(X)$ (with $k$ transcendental) modulo $3$ to obtain $X(X-1)(X+1)^4$, and then factorize $3^{-4}(g(Y)-(385+729k)h(Y))$ modulo $3$, where $Y:=3X-1$. This parameter change sends the two roots congruent to $0$ and $1$ modulo $3$ to infinity, and yields the factorization $X(X-1)(X+1)^2$ for the ``remaining" quartic. Separability behavior of these two factorizations shows that the inertia group at $3$ has at most one orbit of length $2$ and fixed points otherwise, i.e., is either trivial or generated by a transposition - but the latter cannot be the case, since the exponent of $3$ in the polynomial discriminant is quickly verified to be even.

Now computer calculation confirms that, e.g., evaluating $(t_0,s_0)=(783,17)$ in the $\tilde{\lambda}_i$ yields the desired result, namely a vector of (positive) specialization values $\tilde{\lambda}_i(t_0,s_0)$, $i=1,\dots, 10$, each of which is coprime to all the $2\prod_{j=1}^{10}\Delta_j$ and has the ``right" parity $\prod_{p|\Delta_i} \left(\frac{\tilde{\lambda}_i(t_0,s_0)}{p}\right)$.\footnote{These computations are elementary, but it may help to make them explicit for at least one index $i$: we have $\lambda_1(T,S)=T-7S$ and thus $\tilde{\lambda}_1(T,S)=\frac{1}{2\cdot 3^3} 2\cdot 3^3\cdot (432T - 3024S + 7)$, yielding $\tilde{\lambda}_1(t_0,s_0) = 286855$. A prime $\ell$ in the same mod-$4\Delta_1$ residue class as $286855$ would fulfill $\left(\frac{\Delta_1}{\ell}\right) = 1$ if and only if
$1= \left(\frac{286855}{17}\right)\cdot (-\left(\frac{286855}{23}\right))\cdot (-\left(\frac{286855}{43}\right))\cdot \left(\frac{286855}{101}\right)$. All of the latter Legendre symbols actually evaluate to $-1$, yielding product $+1$.}

 Via Proposition \ref{prop:ratfct} and Remark \ref{rem:effectcomp}, this 
%
%
 shows the existence of infinitely many quadratic number fields with $SL_2(\mathbb{F}_9)$-extensions unramified at all non-archimedean primes. 

To get the strengthening for real quadratic fields, note that the above rational function $f(X)$ has a totally real fiber over $\frac{32\cdot729\cdot 783+385}{32\cdot 729\cdot 17+1}$, the point which corresponds to $(t_0,s_0)=(783,17)$ after our transformations above. Hence, there are real fibers in a sufficiently small neighborhood of this point, corresponding to an open cone containing $(t_0,s_0)=(783,17)$. Theorem \ref{thm:gt} thus guarantees the existence of {\it totally real} $S_6$ extensions of $\mathbb{Q}$ fulfilling all assumptions of Proposition \ref{prop:ratfct}. The $SL_2(\mathbb{F}_9)$-extension $L/F$ obtained by applying Proposition \ref{prop:embed} is not necessarily real any more; however, one may apply a simple ``twisting argument" exactly as in the last step of the proof of \cite[Theorem 3.2]{KoeSL25}, to get an unramified $SL_2(\mathbb{F}_9)$-extension $\tilde{L}/F$ which is real; the interested reader may verify that the only additional requirement to carry out this twisting is the existence of at least one ramified prime $p\equiv 3\bmod 4$ of $L/\mathbb{Q}$. In our case, such a prime exists automatically, since among our forms $\tilde{\lambda}_i(T,S)$, there are some reducing to constant $3$ mod $4$.
\end{proof}

\begin{remark}
\begin{itemize}
\item[a)] Proceeding as above, the degree-$5$ rational function $f(x)=\frac{x^2(11x^3+11x^2-31x-15)}{171x^3-309x^2-143x+33}$ can be used to prove the analog of Theorem \ref{thm:2a6} for $2.A_5\cong SL_2(5)$, providing an alternative proof of \cite[Theorem 1.1]{KoeSL25}.
\item[b)]
I am aware of rational functions $f(x)\in \mathbb{Q}(x)$ of degree $d$ with $2d-2$ $\mathbb{Q}$-rational critical values, as needed in Proposition \ref{prop:ratfct}, for all $d\le 6$, but no larger $d$. In view of the problems investigated here, it would be interesting to know about the existence or non-existence of such a function in degree $7$.
\end{itemize}
\end{remark}

{\bf Acknowledgement}: I am indebted to Peter M\"uller for making me aware of the remarkable degree-$6$ rational function used in the proof of Theorem \ref{thm:2a6}. I also thank the anonymous referee for several helpful suggestions.

\end{document}